\newtheorem{theorem}{Theorem}[section]
\newtheorem{lemma}[theorem]{Lemma}
\newtheorem{corollary}[theorem]{Corollary}
\newtheorem{proposition}[theorem]{Proposition}
\newdefinition{remark}[theorem]{Remark}
\newdefinition{example}[theorem]{Example}
\newdefinition{definition}[theorem]{Definition}
\newcommand{\re}{\operatorname{\mathrm{Re}}}
\newcommand{\Bcal}{\mathcal{B}}
\newcommand{\Fcal}{\mathcal{F}}
\newcommand{\rk}{{\mathbb{R}^k}}
\newcommand{\real}{\mathbb{R}}
\newcommand{\nat}{\mathbb{N}}
\newcommand{\Ee}{\mathbb{E}}
\newcommand{\Pp}{\mathbb{P}}
\newcommand{\I}{\mathbf{1}}
\newcommand{\cm}{\mathcal{CM}}
\newcommand\fa{\quad\text{for all \ }}
\begin{document}
	
\title{A probabilistic proof of Schoenberg's theorem}
	
\author[FK]{Franziska K\"{u}hn\corref{cor1}}
\cortext[cor1]{Corresponding author}
\ead{franziska.kuhn@math.univ-toulouse.fr}
\address[FK]{Institut de Math\'ematiques de Toulouse, Universit\'e Paul Sabatier III Toulouse, 118 Route de Narbonne, 31062 Toulouse, France}
	
\author[RS]{Ren\'e L.\ Schilling}
\ead{rene.schilling@tu-dresden.de}
\address[RS]{TU Dresden, Fakult\"{a}t Mathematik, Institut f\"{u}r Mathematische Stochastik, 01062 Dresden, Germany}

\begin{keyword}
    negative definite function \sep subordination \sep L\'evy process \sep transition density \sep Hartman--Wintner condition

\MSC[2010] 60E10 \sep 60G51
\end{keyword}
		
\begin{abstract}
    Assume that $g(|\xi|^2)$, $\xi\in\rk$, is for every dimension $k\in\nat$ the characteristic function of an infinitely divisible random variable $X^k$. By a classical result of Schoenberg $f:=-\log g$ is a Bernstein function. We give a simple probabilistic proof of this result starting from the observation that $X^k = X_1^k$ can be embedded into a L\'evy process $(X_t^k)_{t\geq 0}$ and that Schoenberg's theorem says that $(X_t^k)_{t\geq 0}$ is subordinate to a Brownian motion. A key ingredient of our proof are concrete formulae which connect the transition densities, resp., L\'evy measures of subordinated Brownian motions across different dimensions. As a by-product of our proof we obtain a gradient estimate for the transition semigroup of a subordinated Brownian motion.
\end{abstract}
	
\maketitle
		
\section{Introduction}
I.J.\ Schoenberg proved in 1938 \cite{schoen} the following 
\begin{quote}
    \textbf{Theorem A:} {\itshape If $\rk \ni (x_1,\dots,x_k)\mapsto g(x_1^2+\dots+x_k^2)$ is positive definite \textup{(}in the sense of Bochner\textup{)} for any dimension $k\in\nat$, then $g(r)$, $r>0$, is a completely monotone function.}
\end{quote} 
Recall that $g: (0,\infty) \to (0,\infty)$ is \emph{completely monotone} (notation: $g \in \cm$), if
\begin{equation}\label{lp-eq5}
    g\in C^\infty
    \quad\text{and}\quad
    (-1)^n g^{(n)}(r) \geq 0 \fa r>0, \; n \in \nat_0.
\end{equation}
Schoenberg used Theorem A to determine all positive definite functions in a Hilbert space; this was part of his programme to characterize all metrics $\rho$ in $\rk$ such that $(\rk,\rho)$ can be isometrically embedded into a Hilbert space, cf.\ \cite{schoen2}. A necessary and sufficient condition turns out to be that $\rk \ni x\mapsto e^{-t \rho^2(x,0)}$ is positive definite; in other words: all such metrics are of the form $\rho(x,y) = \sqrt{\psi(x-y)}$ where $\psi$ is the (non-negative!) characteristic exponent of a symmetric L\'evy process. This allows us to re-cast Schoenberg's theorem in the form of 
\begin{quote}
    \textbf{Theorem B:} {\itshape If $\rk \ni (x_1,\dots,x_k)\mapsto \exp\left[-tf(x_1^2+\dots+x_k^2)\right]$ is positive definite \textup{(}in the sense of Bochner\textup{)} for all $t>0$ and any dimension $k\in\nat$, then $g(r) = e^{-tf(r)}$, $r>0$, is a completely monotone function.}
\end{quote}
In probabilistic terms, this means that $g(x_1^2+\dots+x_k^2)$ is an infinitely divisible characteristic function and $f: (0,\infty)\to (0,\infty)$ is a \emph{Bernstein function}, i.e.
\begin{equation}\label{lp-eq4}
    f\in C^\infty,\quad f \geq 0
    \quad\text{and}\quad
    (-1)^{n-1} f^{(n)}(r) \geq 0 \fa r>0, \; n\in\nat.
\end{equation}

Both theorems have attracted a lot of attention and there are several proofs highlighting various (hidden) aspects of Schoenberg's result. Let us briefly describe some of the developments. Modern (streamlined) versions of the classical proof of Theorem A can be found in Donoghue \cite[p.~205]{dono} and Steerneman \& van Perlo-ten Kleij \cite{steern}, where the presentation of the convergence argument as $k\to\infty$ is simplified; following Bochner \cite[p.~99]{bochner}, Theorem B is e.g.\ proved in \cite[Theorem 13.14]{bernstein}.

Using Bochner's characterization of positive definite functions and the solution of Hausdorff's moment problem, Ressel \cite{ressel76} proves Theorem A in a completely different way. This approach is generalized to semigroups in Berg \emph{et al.} \cite[Chapter~5]{berg}. Independent of Ressel, Kahane \cite{kahane} uses essentially the same argument to prove both Theorem A and B. Combining Bochner's theorem with the characterization of completely monotone functions by iterated differences\footnote{In the end, this characterization relies on a deep application of the Krein--Milman theorem, cf.\ \cite[Theorem~4.8]{bernstein}.}, Wendland \cite[Theorem~7.13]{wendland} gives a short proof of Theorem A which is inspired by earlier work by Kuelbs \cite{kuelbs} and Wells \& Williams~\cite[Chapter~II]{wells-williams}. Let us point out that the essential step in these proofs, \cite[p.~94, last 4 lines]{wendland} (also \cite[Lemma~2.1]{kuelbs}, \cite[Theorem~7.2]{wells-williams}) is already present in Harzallah's proof that Bernstein functions operate on negative definite functions \cite[Lemma~6]{harzallah67}, see also Jacob \cite[Lemma~3.9.22]{jacob} who works out this detail.

The methods to prove Theorems A and B are closely related to the so-called \emph{Schoenberg's problem} in the geometry of Banach spaces: 
\begin{quote}
    {\itshape Determine all values $\alpha\geq 0$ such that $\exp(-\|x\|_{\ell^p}^\alpha)$ is positive definite on $\real^k$ with $k\geq 2$ and $p\geq 1$.} 
\end{quote}
For $1\leq p\leq 2$ this is discussed by Bretagnolle \emph{et al.}\ \cite{bret66} (who establish the connection with the embeddability of normed linear spaces into $L^p$); Zastavnyi \cite{zast} has the definitive solution.

Our approach to prove Theorem B uses elements of the Fourier approach from the original proof of Schoenberg's theorem, but the rather awkward limiting argument, sending the dimension $k\to\infty$, is now replaced by a ``dimension walk'' argument which was pioneered by Matheron who calls it the \emph{mont\'ee et descente en clavier isotrope} \cite[pp.~31--37]{matheron65}, see also the unpublished manuscript \cite{matheron72}.

\section{Preliminaries}
A function $u: \rk \to \real$ is called \emph{rotationally invariant} if $u(x)$ depends only on $|x|$, i.e.\ if $u(x) = U(|x|)$ for some function $U: [0,\infty) \to \real$. In abuse of notation we write $u(r) = U(r)$ for $r \geq 0$. For an integrable function $u: \rk \to \real$ we denote by
\begin{align} \label{ft-eq1}
\begin{aligned}
	\Fcal_k u(\xi) &:= \frac{1}{(2\pi)^k} \int_{\rk} e^{-ix \cdot \xi} u(x) \, dx, & \xi \in \rk, \\
	\Fcal_k^{-1} u(\xi) &:= \int_{\rk} e^{ix \cdot \xi} u(x) \, dx, & \xi \in \rk,
\end{aligned}
\end{align}
the \emph{Fourier transform} and \emph{inverse Fourier transform} of $u$, respectively. If $u$ is rotationally invariant, then both $\Fcal_k u$ and $\Fcal_k^{-1} u$ are rotationally invariant and \begin{align}
	\Fcal_k u(r)
	= \frac{1}{(2\pi)^k} \Fcal_k^{-1} u(r)
	= \frac{1}{(2\pi)^{k/2} r^{k/2-1}} \int_{(0,\infty)} u(s) s^{k/2} J_{k/2-1}(sr) \, ds \label{ft-eq2}
\end{align}
where $J_{\nu}$ denotes the Bessel function of the first kind, see e.g.\ \cite[Example 19.4]{mims} or \cite[Theorem 5.26]{wendland} for a proof. Using \eqref{ft-eq2} and some identities for Bessel functions \cite[(10.6.2)]{nist} (see also the proof of Theorem~\ref{intro-9}), it is not hard to see that
\begin{equation}\label{ft-eq3}
		\Fcal_{k+2} u(r) = - \frac{1}{2\pi} \frac{1}{r} \frac{d}{dr} \Fcal_k u(r)
\end{equation}
for any rotationally invariant function such that $u(|\cdot|) \in L^1(\rk,dx) \cap L^1(\real^{k+2},dx)$; this observation is due to Matheron \cite[(1.4.9)]{matheron65}.

Let $(\Omega,\mathcal{A},\Pp)$ be a probability space. A random variable $X: \Omega \to \rk$ is called \emph{unimodal isotropic} if $\Pp(X \in dx) = c \delta_0(dx) + p(|x|) \, dx$ for some non-increasing $p:(0,\infty) \to [0,\infty)$ and $c \in [0,1]$.  A family of random variables $X_t: \Omega \to \rk$ is called (\emph{$k$-dimensional}) \emph{L\'evy process} if $X_0 = 0$ a.s., $(X_t)_{t \geq 0}$ has independent and stationary increments and $t \mapsto X_t(\omega)$ is for almost all $\omega \in \Omega$ right-continuous with finite left limits. Our standard reference for L\'evy processes is the monograph by Sato \cite{sato}. For an introduction to L\'evy processes we also recommend \cite{barca}. We will often use the superscript to indicate the dimension, i.e.\ we write $(X_t^k)_{t \geq 0}$ for a $k$-dimensional L\'evy process. It is well known, cf.\ \cite{sato}, that $(X_t)_{t \geq 0}$ can be uniquely characterized via its \emph{characteristic exponent},
\begin{equation*}
    \psi(\xi)
    = - i \, b \cdot \xi + \frac{1}{2} \xi \cdot Q \xi + \int_{\rk \setminus \{0\}} \left(1-e^{i y \cdot \xi}+ i \, y \cdot \xi \I_{(0,1)}(|y|)\right) \, \nu(dy),
    \quad \xi \in \rk;
\end{equation*}
the \emph{L\'evy triplet} $(b,Q,\nu)$ consists of the \emph{drift} $b \in \rk$, a positive semi-definite symmetric matrix $Q \in \real^{k \times k}$ and the \emph{L\'evy measure} $\nu$ on $(\rk \setminus \{0\},\Bcal(\rk \setminus \{0\}))$ satisfying $\int_{\rk \setminus \{0\}} \min\{1,|y|^2\} \, \nu(dy)<\infty$. We say that $\psi$ satisfies the \emph{Hartman--Wintner condition} if
\begin{equation}\tag{HW}\label{lp-eq2}
	\lim_{|\xi| \to \infty} \frac{\re \psi(|\xi|)}{\log |\xi|} = \infty.
\end{equation}
It is shown in \cite{knop} that the Hartman--Wintner condition is equivalent to the existence of a smooth transition density $p_t$ for all $t>0$. A function $\psi$ is \emph{continuous negative definite} (in the sense of Schoenberg) if, and only if, it is the characteristic exponent of a L\'evy process. The domain of the \emph{generator} $A=A_k$ of a $k$-dimensional L\'evy process contains the compactly supported smooth functions $C_c^{\infty}(\rk)$ and
\begin{equation}\label{lp-eq1}
	A_k u(x)
    = - \int_{\rk} e^{ix \cdot \xi} \psi(\xi) \Fcal_k u(\xi) \, d\xi
    = -\Fcal_k^{-1}(\psi \cdot \Fcal_k u)(x), \quad u \in C_c^{\infty}(\rk),\; x \in \rk,
\end{equation}
cf.\ \cite[Theorem 6.8]{barca}. If $\psi$ is a rotationally invariant characteristic exponent of a $k$-dimensional L\'evy process  and $u(x) = u(|x|)$ a rotationally invariant function with compact support, then we write in accordance with \eqref{ft-eq2} \begin{equation}
	A_k u(r) := - \Fcal_k^{-1}(\psi \cdot \Fcal_k u)(r), \quad r \geq 0. \label{ft-eq4}
\end{equation}
The \emph{jump measure} $N$ of $(X_t)_{t \geq 0}$ is given by
\begin{equation}\label{lp-eq3}
	N_t(B) := \# \{s \in [0,t]; \Delta X_s := X_s-X_{s-} \in B\}, \quad B \in \mathcal{B}(\rk \setminus \{0\}),\; t \geq 0. \end{equation}
For any fixed Borel set $B \in \mathcal{B}(\rk \setminus \{0\})$ the process $(N_t(B))_{t \geq 0}$ is a Poisson process with intensity $\nu(B)$, cf.\ \cite[Lemma 9.4]{barca}.

A one-dimensional L\'evy process $(S_t)_{t \geq 0}$ is called a \emph{subordinator} if $(S_t)_{t \geq 0}$ has non-decreasing sample paths. A subordinator is uniquely characterized by its Laplace transform $\Ee e^{-u S_t} = e^{-t f(u)}$, $u \geq 0$; the characteristic (Laplace) exponent $f$ is a \emph{Bernstein function}, i.e.
\begin{equation*}
	f(u) = \alpha u + \int_0^{\infty} (1-e^{-uy}) \, \mu(dy), \quad u \geq 0,
\end{equation*}
for $\alpha \geq 0$ and a measure $\mu$ on $(0,\infty)$ such that $\int_{0}^{\infty} \min\{1,y\} \, \mu(dy) <\infty$. By Bernstein's theorem, cf.\ \cite[Theorem 3.2]{bernstein}, this is equivalent to \eqref{lp-eq4}.

If $(S_t)_{t \geq 0}$ is a subordinator with Laplace exponent $f$ and $(B_t)_{t \geq 0}$ an independent Brownian motion, then the \emph{subordinated Brownian motion} $(B_{S_t})_{t \geq 0}$ is again a  L\'evy process, and its characteristic exponent is given by $\psi(\xi) = f(|\xi|^2)$. A comprehensive treatment of completely monotone functions, Bernstein functions and subordination is given in \cite{bernstein}.

\section{Results}

We will prove the following extended version of Schoenberg's theorem.
\begin{theorem}\label{intro-5}
    Let $f:[0,\infty) \to [0,\infty)$. The following statements are equivalent.
    \begin{enumerate}[(i)]
	\item\label{intro-5-i}
        $\rk \ni \xi \mapsto f(|\xi|^2)$ is a continuous negative definite function for all $k \geq 1$.
	\item\label{intro-5-ii}
        $\rk \ni \xi \mapsto f(|\xi|^2)$ is a continuous negative definite function for any $k=1+2n$, $n \in \mathbb{N}_0$.
	\item\label{intro-5-iii}
        $f$ is a Bernstein function.
	\item\label{intro-5-iv}
        For some \textup{(}all\textup{)} $k \geq 1$ there exists a $k$-dimensional L\'evy process $(X_t)_{t \geq 0}$ with characteristic exponent $\psi(\xi) := f(|\xi|^2)$ and
        \begin{equation}\label{intro-eq11}
		  \Pp(X_t \in B) = e^{-ct} \delta_0(B) + \int_B p_t(x) \, dx, \quad B \in \Bcal(\rk),\; t>0,
		\end{equation}
        for some constant $c \in [0,\infty]$ and a rotationally  invariant function $p_t: \rk \to [0,\infty)$ such that $p_t(\sqrt{\cdot})$ is completely monotone.
	\end{enumerate}
\end{theorem}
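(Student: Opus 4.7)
I plan to establish the cycle (iii)$\Rightarrow$(i)$\Rightarrow$(ii)$\Rightarrow$(iii) of Theorem~\ref{intro-5} together with the separate equivalence (iii)$\Leftrightarrow$(iv). The three easy directions are essentially bookkeeping. For (iii)$\Rightarrow$(i), the subordinated Brownian motion construction recalled in Section~2 realises $f(|\xi|^2)$ as a characteristic exponent in every dimension; (i)$\Rightarrow$(ii) is trivial; and for (iii)$\Rightarrow$(iv), conditioning on $S_t$ gives the form \eqref{intro-eq11} with $c=f(\infty)$ and the Gaussian mixture
\[
    p_t(r) = \int_{(0,\infty)} (4\pi s)^{-k/2}\, e^{-r^2/(4s)}\, \Pp(S_t \in ds),
\]
so that $p_t(\sqrt{\cdot})$ is a mixture of the completely monotone functions $u\mapsto e^{-u/(4s)}$, hence completely monotone.

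The substantial step is (ii)$\Rightarrow$(iii). Since the L\'evy process attached to $f$ need not possess a transition density, I first perturb: set $f_\varepsilon(u):=f(u)+\varepsilon u$ for $\varepsilon>0$. Adding $\varepsilon|\xi|^2$ preserves continuous negative definiteness in every dimension, so (ii) also holds for $f_\varepsilon$, and the quadratic term forces the Hartman--Wintner condition \eqref{lp-eq2}. Hence for every odd $k=1+2n$ the L\'evy process with characteristic exponent $f_\varepsilon(|\xi|^2)$ admits a smooth, rotationally invariant transition density $p^{k,\varepsilon}_t$. Crucially, the Fourier side $e^{-tf_\varepsilon(|\cdot|^2)}$ is the \emph{same} rotationally invariant function across all dimensions, so Matheron's dimension walk \eqref{ft-eq3}, applied via $p^{k,\varepsilon}_t=\Fcal_k[e^{-tf_\varepsilon(|\cdot|^2)}]$, yields
\[
    p^{k+2,\varepsilon}_t(r) = -\frac{1}{2\pi r}\, \frac{d}{dr}\, p^{k,\varepsilon}_t(r).
\]
Setting $q_k(u):=p^{k,\varepsilon}_t(\sqrt u)$, the chain rule rewrites this recursion as $q_{k+2}(u)=-\pi^{-1} q_k'(u)$, and iterating gives $q_{1+2n}(u)=(-\pi^{-1})^n q_1^{(n)}(u)$. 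Nonnegativity of every $p^{1+2n,\varepsilon}_t$ therefore forces $(-1)^n q_1^{(n)}\geq 0$, i.e.\ $q_1=p^{1,\varepsilon}_t(\sqrt{\cdot})$ is completely monotone.

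By Bernstein's theorem \cite[Theorem~3.2]{bernstein} one then has $p^{1,\varepsilon}_t(x)=\int_0^\infty e^{-sx^2}\,\rho^\varepsilon_t(ds)$, and a Gaussian integral on the Fourier side gives
\[
    e^{-tf_\varepsilon(u)} = \int_0^\infty \sqrt{\pi/s}\; e^{-u/(4s)}\, \rho^\varepsilon_t(ds),\qquad u\geq 0,
\]
exhibiting $u\mapsto e^{-tf_\varepsilon(u)}$ as completely monotone for every $t>0$; this is the standard characterisation of Bernstein functions, so $f_\varepsilon$ is Bernstein, and since the class of Bernstein functions is closed under pointwise limits, $\varepsilon\downarrow 0$ delivers (iii). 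The implication (iv)$\Rightarrow$(iii) needs no perturbation: running the same Fourier computation directly on $p_t(x)=\int e^{-s|x|^2}\,\rho_t(ds)$, with the atom at $0$ accounting for the $e^{-ct}$ contribution, identifies $e^{-tf(u)}$ as the Laplace transform of a probability measure on $[0,\infty)$, whence $f$ is Bernstein.

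I expect the main obstacle to be the rigorous iteration of the dimension walk: one must verify enough smoothness and integrability at every step to justify differentiation under the Fourier integral and to propagate the constants through the chain rule. The perturbation by $\varepsilon|\xi|^2$ is precisely the device that places $e^{-tf_\varepsilon(|\cdot|^2)}$ in the Schwartz class and so legitimises each step of the induction; without it, even the base-case density $p^1_t$ need not exist, and the entire inductive scheme collapses at the outset.
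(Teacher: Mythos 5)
Your proposal is correct and follows essentially the same route as the paper. The cycle of implications matches, the key step (ii)$\Rightarrow$(iii) uses exactly the paper's dimension-walk argument (Matheron's recursion \eqref{ft-eq3} applied to $p_t^k = \Fcal_k[e^{-tf(|\cdot|^2)}]$, then the substitution $q_k(u)=p_t^k(\sqrt u)$ or $g_t^k(r)=p_t^k(2\sqrt r)$ to turn the $\frac{1}{r}\frac{d}{dr}$ recursion into pure differentiation), the same Gaussian-mixture inversion shows $e^{-tf}\in\cm$, and the same perturbation $f_\varepsilon=f+\varepsilon\cdot\mathrm{id}$ with closure of Bernstein functions under pointwise limits removes the Hartman--Wintner assumption; the only cosmetic differences are normalization constants in the substitution and that the paper proves the Hartman--Wintner case as a standalone first step (via Proposition~\ref{pr-1}) before perturbing, whereas you perturb from the outset.
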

The proof of Theorem~\ref{intro-5} actually shows that $f$ is a Bernstein function if, and only if, there exist infinitely many $n \in \nat$ such that $\rk \ni \xi \mapsto f(|\xi|^2)$ is a continuous negative definite function for $k=n$ and $k=n+2$. Moreover, if $(S_t)_{t \geq 0}$ is a subordinator with Laplace exponent $f$, then the L\'evy process $(X_t)_{t \geq 0}$ is subordinate to a Brownian motion, i.e.\ it is, in distribution, a time-changed Brownian motion $(B_{S_t})_{t \geq 0}$. The next corollary reveals how the density function $p_t^k$ and the L\'evy measure $\nu_k$ in different dimensions are related. As before, we use sub- and superscripts to indicate the dimension. We continue using the notation of Theorem~\ref{intro-5}.

\begin{corollary} \label{intro-6}
    Let $f:[0,\infty) \to [0,\infty)$ and suppose that one \textup{(}hence all\textup{)} of the conditions of Theorem~\ref{intro-5} holds.
    \begin{enumerate}[(i)]
	\item\label{intro-6-i}
    The rotationally invariant function $p_t = p_t^k$ satisfies
    \begin{equation*}
		p_t^{k}(r) = - \frac{1}{2\pi} \frac{1}{r} \frac{d}{dr} p_t^{k-2}(r) \fa r >0,\: t>0,\: k \geq 3.
	\end{equation*}
	\item\label{intro-6-ii}
    The L\'evy measure $\nu_k$ of the $k$-dimensional L\'evy process $(X_t)_{t \geq 0}$ has a rotationally invariant density $m_k$ with respect to $k$-dimensional Lebesgue measure; moreover, $m_k(\sqrt{\cdot})$ is completely monotone and satisfies
    \begin{equation*}
		m_{k}(r) = - \frac{1}{2\pi} \frac{1}{r} \frac{d}{dr} m_{k-2}(r) \fa r>0,\: k \geq 3.
	\end{equation*}
    \end{enumerate}
\end{corollary}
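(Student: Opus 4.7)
My plan is to invoke Theorem~\ref{intro-5} to realise $(X_t^k)_{t\geq 0}$ as a subordinated Brownian motion $(B_{S_t})_{t\geq 0}$, with $(B_t)_{t\geq 0}$ a $k$-dimensional Brownian motion independent of a subordinator $(S_t)_{t\geq 0}$ with Laplace exponent $f$. Once this representation is in hand, both parts of the corollary reduce to elementary pointwise identities for the Gaussian heat kernel, and the dimension walk \eqref{ft-eq3} never has to be invoked.

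For \eqref{intro-6-i}, conditioning on $S_t$ yields
\[
    \Pp(X_t^k \in dx) = \Pp(S_t = 0)\,\delta_0(dx) + \left(\int_{(0,\infty)} (2\pi s)^{-k/2} e^{-|x|^2/(2s)}\,\Pp(S_t \in ds)\right) dx,
\]
identifying the absolutely continuous part in \eqref{intro-eq11} as the radial function
\[
    p_t^k(r) = \int_{(0,\infty)} (2\pi s)^{-k/2} e^{-r^2/(2s)}\,\Pp(S_t \in ds),
\]
with atom $\Pp(S_t=0) = \lim_{u\to\infty}\Ee e^{-uS_t} = e^{-tf(\infty)}$, so $c = f(\infty)$. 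A direct computation gives the pointwise identity
\[
    -\frac{1}{2\pi r}\frac{d}{dr}\bigl[(2\pi s)^{-k/2} e^{-r^2/(2s)}\bigr] = (2\pi s)^{-(k+2)/2} e^{-r^2/(2s)},
\]
and integrating against $\Pp(S_t \in ds)$ (differentiation under the integral being justified by dominated convergence) yields $p_t^{k+2}(r) = -\frac{1}{2\pi r}\frac{d}{dr} p_t^k(r)$.

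For \eqref{intro-6-ii}, the L\'evy measure of a subordinated Brownian motion is well known (see e.g.\ \cite[Chapter~30]{sato}) to be $\nu_k(dy) = m_k(|y|)\,dy$ with
\[
    m_k(r) = \int_{(0,\infty)} (2\pi s)^{-k/2} e^{-r^2/(2s)}\,\mu(ds),
\]
where $\mu$ is the L\'evy measure of $(S_t)$; the same pointwise derivative identity on the Gaussian kernel yields the recursion for $m_k$. Complete monotonicity of $m_k(\sqrt{\cdot})$ is immediate because
\[
    m_k(\sqrt{u}) = \int_{(0,\infty)} (2\pi s)^{-k/2} e^{-u/(2s)}\,\mu(ds)
\]
is the Laplace transform of a non-negative measure on $(0,\infty)$, hence completely monotone by Bernstein's theorem; the same observation applied to $\Pp(S_t \in ds)$ also delivers the complete monotonicity of $p_t^k(\sqrt{\cdot})$ claimed in Theorem~\ref{intro-5}\eqref{intro-5-iv}.

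The only real work is bookkeeping: verifying finiteness of the Gaussian integrals for every $r>0$, $t>0$ (using the condition $\int \min\{1,s\}\,\mu(ds)<\infty$ together with the super-exponential decay of the heat kernel as $s\to 0^+$) and justifying differentiation under the integral sign. A Fourier-theoretic alternative would apply Matheron's formula \eqref{ft-eq3} directly to $e^{-tf(|\cdot|^2)} - e^{-ct}$ and to the Fourier transform of the L\'evy density, but this route requires additional integrability hypotheses and seems less transparent than the Gaussian mixture calculation.
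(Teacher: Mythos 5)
Your proposal is correct and follows essentially the same route as the paper: both arguments condition on the subordinator to obtain the Gaussian mixture representation \eqref{pr-eq9} for $p_t^k$ (and the analogous Sato--Theorem~30.1 formula for $m_k$), then deduce the recursion by differentiating the heat kernel $(2\pi s)^{-k/2}e^{-r^2/(2s)}$ pointwise in $r$ under the integral; the paper likewise bypasses Matheron's identity \eqref{ft-eq3} in this corollary. You are slightly more explicit than the paper in spelling out the complete monotonicity of $m_k(\sqrt{\cdot})$ via Bernstein's theorem and in flagging the dominated-convergence justification, but there is no substantive difference in method.
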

If we formally define an operator $T$ by $T := - \frac{1}{2\pi} \frac{1}{r} \frac{d}{dr}$, then Corollary~\ref{intro-6}\eqref{intro-6-i} reads $p_t^k = T p_t^{k-2}$. Theorem~\ref{intro-9} shows that a similar relation holds for the generator:
\begin{equation*}
	A_k u = T A_{k-2} (T^{-1} u)
\end{equation*}
for any smooth rotationally invariant function $u$ with compact support. This means that the operators $A_{k-2}$ and $A_k$ are intertwined.

\begin{theorem} \label{intro-9}
    Let $(X_t^k)_{t \geq 0}$ be a $k$-dimensional L\'evy process with rotationally invariant characteristic exponent $\psi_k(\xi) = \psi(|\xi|)$, $\xi \in \rk$, $k \geq 3$. Then there exists a $(k-2)$-dimensional L\'evy process $(X_t^{k-2})_{t \geq 0}$ with characteristic exponent $\psi_{k-2}(\xi) := \psi(|\xi|)$, $\xi \in \real^{k-2}$. If we denote by $A_{k-2}$ and $A_k$ the generator of $(X_t^{k-2})_{t \geq 0}$ and $(X_t^k)_{t \geq 0}$, respectively, then both $A_k u$ and $A_{k-2} u$ are rotationally invariant and
    \begin{equation*}
		A_k u(r)
        = \frac{1}{r} \frac{d}{dr} A_{k-2} \left( \int_0^{\bullet} s u(s) \, ds\right)(r), \quad r>0,
	\end{equation*}
	for any smooth rotationally invariant function $u$ with compact support, cf.\ \eqref{ft-eq4}.
\end{theorem}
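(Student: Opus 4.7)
My plan is a three-step argument: (i) produce the $(k-2)$-dimensional L\'evy process, (ii) check rotational invariance of $A_k u$ and $A_{k-2}v$, and (iii) derive the intertwining identity by a Fourier/Bessel calculation funneled through \eqref{ft-eq3}.

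Step (i) is essentially free: restrictions of positive definite functions are positive definite, so restrictions of continuous negative definite functions are continuous negative definite. Applied to $\psi_k$ on $\rk$ and the subspace $\real^{k-2}\times\{0\}^2$, this produces a continuous negative definite function on $\real^{k-2}$ equal to $\psi(|\xi'|)$, and hence the characteristic exponent of some L\'evy process $(X_t^{k-2})_{t\ge 0}$ (concretely, the projection of $X_t^k$ onto its first $k-2$ coordinates). Step (ii) is equally quick: in \eqref{lp-eq1}, $\Fcal_k u$ is rotationally invariant whenever $u$ is (by \eqref{ft-eq2}), so $\psi\cdot\Fcal_k u$ is rotationally invariant, and $\Fcal_k^{-1}$ preserves radial symmetry; the same argument handles $A_{k-2}$.

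The substance is (iii). I would introduce the auxiliary function $\tilde v(s):=-\int_s^\infty tu(t)\,dt = v(s)-c$, where $v(s):=\int_0^s tu(t)\,dt$ and $c:=\int_0^\infty tu(t)\,dt$. A short induction based on $\tilde v^{(n+1)}(s) = n u^{(n-1)}(s)+su^{(n)}(s)$ shows that all odd derivatives of $\tilde v$ at $0$ vanish, so $\tilde v$ is a rotationally invariant element of $C_c^\infty(\real^{k-2})$. Since $v$ and $\tilde v$ differ only by the constant $c$ and constants are annihilated by the generator of any L\'evy process, $\frac{d}{dr}A_{k-2}v = \frac{d}{dr}A_{k-2}\tilde v$, so it suffices to prove the theorem with $v$ replaced by $\tilde v$. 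The crux is then the Fourier identity
\[
\Fcal_{k-2}\tilde v(r) = -2\pi\,\Fcal_k u(r), \qquad r>0,
\]
which I would prove from the radial representation \eqref{ft-eq2} by integrating by parts in $s$: using $\tilde v'(s) = su(s)$ together with the Bessel recurrence $\frac{d}{dz}\bigl(z^\mu J_\mu(z)\bigr) = z^\mu J_{\mu-1}(z)$ applied at $\mu=(k-2)/2$, one turns the $\Fcal_{k-2}$-integrand of $\tilde v$ into the $\Fcal_k$-integrand of $u$. The boundary term at infinity vanishes because $\tilde v$ has compact support, and the one at $0$ because $J_\mu(0)=0$ whenever $\mu>0$, i.e.\ precisely when $k\ge 3$.

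Once this identity is in place, applying $-\Fcal_{k-2}^{-1}(\psi\,\cdot\,)$ gives $A_{k-2}\tilde v(r) = 2\pi\,\Fcal_{k-2}^{-1}(\psi\,\Fcal_k u)(r)$; then acting with $\frac{1}{r}\frac{d}{dr}$ and using the inverse-transform form of \eqref{ft-eq3}, namely $\frac{1}{r}\frac{d}{dr}\Fcal_{k-2}^{-1}h = -\frac{1}{2\pi}\Fcal_k^{-1}h$ on radial $h$ (immediate from \eqref{ft-eq3} together with $\Fcal_m^{-1}=(2\pi)^m\Fcal_m$ on radial functions), produces $\frac{1}{r}\frac{d}{dr}A_{k-2}\tilde v(r) = -\Fcal_k^{-1}(\psi\Fcal_k u)(r) = A_k u(r)$, as required. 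I expect the main obstacle to be the Bessel-type integration by parts combined with the bookkeeping point that the antiderivative $v$ in the statement is not itself compactly supported; both are handled cleanly by passing to $\tilde v$ and verifying that the boundary contributions vanish exactly for $k\ge 3$.
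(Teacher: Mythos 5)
Your argument is correct and follows essentially the same route as the paper: existence of $X^{k-2}$ by coordinate projection, reduction to the Fourier identity $\Fcal_{k-2}\tilde v = -2\pi\,\Fcal_k u$ (equivalently $\Fcal_{k-2}^{-1}\Fcal_k u = -\tfrac{1}{2\pi}v$ in the paper's notation), and the same Bessel integration by parts, merely run in the opposite direction and using the recurrence $\frac{d}{dz}\bigl(z^\mu J_\mu(z)\bigr)=z^\mu J_{\mu-1}(z)$ in place of the expanded form \cite[(10.6.2)]{nist}. Your explicit verification that $\tilde v$ is the radial profile of a $C_c^\infty(\real^{k-2})$ function and that the boundary term at $0$ vanishes precisely when $k\ge 3$ spells out details the paper leaves implicit, but the decomposition and the key lemma are the same.
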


Finally, we derive the following result on subordinated Brownian motion.
\begin{corollary} \label{intro-7}
    Let $(B_t)_{t \geq 0}$ be a $k$-dimensional Brownian motion and $(S_t)_{t \geq 0}$ a subordinator with Laplace exponent $f$. The subordinated Brownian motion $X_t := B_{S_t}$ satisfies $\Pp(X_t \in dx) = e^{-ct} \delta_0(dx) + p_t(x) \, dx$ with $c$ and $p_t$ as in \ref{intro-5}\eqref{intro-5-iv}.
    \begin{enumerate}[(i)]
	\item\label{intro-7-i} The following statements are equivalent.
        \begin{enumerate}
		\item
            $c=\infty$ \textup{(}i.e.\ $\Pp(X_t = 0)=0$ for some $t>0$\textup{)} and $\lim_{r \to 0} p_t(r)<\infty$ for small $t>0$.
		\item
            $\Ee(S_t^{-k/2})<\infty$ for small $t>0$.
		\item
            $\psi(\xi) := f(|\xi|^2)$ satisfies the Hartman--Wintner condition \eqref{lp-eq2}.
        \end{enumerate}
	\item\label{intro-7-ii}
        $c < \infty$ \textup{(}i.e.\ $\Pp(X_t = 0)>0$ for some $t>0$\textup{)} if, and only if, $(X_t)_{t \geq 0}$ is a compound Poisson process.
	\end{enumerate}
\end{corollary}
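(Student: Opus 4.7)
The starting point is to condition on $S_t$: since $B$ and $S$ are independent and the normalisation is such that $\Ee e^{i\xi\cdot B_s}=e^{-s|\xi|^2}$ (so $B_s$ has density $(4\pi s)^{-k/2}e^{-|x|^2/(4s)}$), one gets
\begin{equation*}
	\Pp(X_t \in dx) = \Pp(S_t = 0)\,\delta_0(dx) + \left[\int_{(0,\infty)} (4\pi s)^{-k/2} e^{-|x|^2/(4s)}\,\Pp(S_t \in ds)\right] dx.
\end{equation*}
Comparing with \eqref{intro-eq11} identifies $e^{-ct} = \Pp(S_t = 0)$ together with an explicit integral representation of $p_t$. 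From here the whole proof reduces to analysing the law of $S_t$.

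For part \eqref{intro-7-ii} I would use $\Pp(S_t=0) = \lim_{u\to\infty}\Ee e^{-uS_t} = \exp(-tf(\infty))$, so $c<\infty$ iff $f$ is bounded. The L\'evy--Khintchine representation of $f$ shows that boundedness amounts to drift $\alpha=0$ and finite L\'evy measure $\mu$, i.e.\ to $(S_t)_{t\geq 0}$ being compound Poisson. In that case $(B_{S_t})_{t\geq 0}$ is constant between the jump times of $S$ and is itself a compound Poisson L\'evy process on $\rk$; conversely, a compound Poisson $(X_t)_{t\geq 0}$ has bounded characteristic exponent $\psi=f(|\cdot|^2)$, forcing $f$ to be bounded.

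For \eqref{intro-7-i}, the equivalence \textup{(a)}$\Leftrightarrow$\textup{(b)} follows by monotone convergence as $r\downarrow 0$ in the integral formula for $p_t$: this gives $\lim_{r\to 0} p_t(r) = (4\pi)^{-k/2}\Ee(S_t^{-k/2}\I_{\{S_t>0\}})$, and with the convention $S_t^{-k/2}=+\infty$ on $\{S_t=0\}$ the quantity $\Ee(S_t^{-k/2})$ is finite exactly when $\Pp(S_t=0)=0$ (that is $c=\infty$) \emph{and} $\lim_{r\to 0} p_t(r)<\infty$. For \textup{(b)}$\Leftrightarrow$\textup{(c)}, Tonelli applied to $s^{-k/2}=\Gamma(k/2)^{-1}\int_0^\infty u^{k/2-1}e^{-us}\,du$ (valid on $\{s>0\}$) yields
\begin{equation*}
	\Ee\bigl(S_t^{-k/2}\I_{\{S_t>0\}}\bigr) = \Gamma(k/2)^{-1}\int_0^\infty u^{k/2-1} e^{-tf(u)}\,du.
\end{equation*}
Since $u^{k/2-1}$ is integrable at the origin, only the tail matters. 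If \eqref{lp-eq2} holds, then $e^{-tf(u)}$ beats every negative power of $u$ at infinity for each $t>0$, and the integral converges.

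The delicate step is \textup{(b)}$\Rightarrow$\textup{(c)}. If \eqref{lp-eq2} fails, there exist $M>0$ and $u_n\uparrow\infty$ with $f(u_n)\leq M\log u_n$. A Bernstein function is concave with $f(0)\geq 0$ and hence satisfies $f(2u)\leq 2f(u)$; thus, for $u\in[u_n,2u_n]$,
\[
	f(u) \leq f(2u_n) \leq 2f(u_n) \leq 2M\log u_n \leq 2M\log u,
\]
so $e^{-tf(u)}\geq u^{-2Mt}$ on this interval. Taking $t$ small enough that $k/2-2Mt>0$, the piece $\int_{u_n}^{2u_n} u^{k/2-1-2Mt}\,du$ blows up along the subsequence, forcing $\Ee(S_t^{-k/2})=\infty$ for small $t$ and contradicting \textup{(b)}. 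The main obstacle is precisely this implication: one must convert the pointwise failure of the Hartman--Wintner condition along a subsequence $(u_n)$ into divergence of the integral, and it is the concavity (equivalently subadditivity) of the Bernstein function $f$ that makes this transfer possible.
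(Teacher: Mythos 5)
Your proof is correct. The overall strategy for part \eqref{intro-7-i} is the same as the paper's: condition on $S_t$ to get the integral formula for $p_t$, identify $\lim_{r\to 0}p_t(r)$ with a constant times $\Ee\bigl(S_t^{-k/2}\I_{\{S_t>0\}}\bigr)$ by monotone convergence, and use the Gamma-function identity and Tonelli to turn $\Ee(S_t^{-k/2})$ into $\Gamma(k/2)^{-1}\int_0^\infty u^{k/2-1}e^{-tf(u)}\,du$. The direction \textup{(c)}$\Rightarrow$\textup{(b)} is then identical. Where you differ is in the two points where some genuine work is needed, and in both cases you take a cleaner route that exploits the structure of Bernstein functions more directly.

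For \textup{(b)}$\Rightarrow$\textup{(c)} in part \eqref{intro-7-i}, the paper bounds $\int_1^\infty e^{-tf(r)}r^{-1/2}\,dr$ by the full integral, substitutes $r=s^4$ to get $\int_1^\infty e^{-tf(s^4)}s\,ds<\infty$, deduces $e^{-tf(s^4)}s\to 0$ (their stated justification via continuity alone is insufficient; it is the monotonicity of $f$ that makes this step valid), and then sends $t\to 0$ to obtain \eqref{lp-eq2}. You instead argue by contrapositive: the concavity (equivalently subadditivity, $f(2u)\le 2f(u)$) of $f$ propagates the failure of \eqref{lp-eq2} along a subsequence $u_n$ to an upper bound $f(u)\le 2M\log u$ on whole intervals $[u_n,2u_n]$, forcing the integral to diverge for all small $t$. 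Both arguments lean on qualitative properties of Bernstein functions; yours avoids the change of variables, needs no limiting argument in $t$, and contains no hidden step.

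For part \eqref{intro-7-ii}, your argument is genuinely different from the paper's. You observe $\Pp(S_t=0)=e^{-tf(\infty)}$, so $c=f(\infty)$; then $c<\infty$ iff $f$ is bounded iff the subordinator has zero drift and finite L\'evy measure iff $S$ is compound Poisson, and $B_S$ is compound Poisson exactly when $S$ is. The paper instead introduces the stopping time $\tau=\inf\{t>0:X_t\neq 0\}$, applies the strong Markov property to show $\Pp(X_t=0)=\Pp(\sup_{s\le t}|X_s|=0)$, and compares with the jump measure to get $\nu_k(\rk\setminus\{0\})\le c<\infty$. The paper's route is more general (it is essentially Zabczyk's argument for arbitrary rotationally invariant L\'evy processes, which they cite); yours is more elementary and more transparent in the specific subordination setting, at the cost of generality.
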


Corollary~\ref{intro-7}\eqref{intro-7-ii} implies that a subordinate Brownian motion is either a compound Poisson process or absolutely continuous for all $t>0$. Zabczyk \cite{zab} proved, more generally, that this holds for any L\'evy process with a rotationally invariant characteristic exponent. Moreover, (the proof of) Corollary~\ref{intro-7}\eqref{intro-7-i} shows that for a subordinator $(S_t)_{t \geq 0}$ with Laplace exponent $f$ we have
\begin{equation*}
	\Ee(S_t^{-\kappa})<\infty \text{\ \ for some (all) $\kappa>0$ and small (all) $t>0$}
\end{equation*}
if, and only if, $f$ satisfies the Hartman--Wintner condition $\lim_{r \to \infty} f(r)/\log r= \infty$. For a discussion of the Hartman--Wintner condition \eqref{lp-eq2} see Knopova \& Schilling \cite{knop}.

\section{Proof of Theorem~\ref{intro-5}}

In order to prove Schoenberg's original statement (the equivalence of \ref{intro-5}\eqref{intro-5-i} and \ref{intro-5}\eqref{intro-5-iii}) we will first focus on functions $f$ satisfying the Hartman--Wintner condition \begin{equation*}
	\lim_{r \to \infty} \frac{f(r)}{\log r} = \infty,
\end{equation*}
and then extend the result using an approximation argument. The key tool is the following proposition which is of independent interest. It is inspired by a publication by Kulczycki \& Ryznar \cite{kulc} where the implication ``\eqref{pr-1-ii}$\Rightarrow$\eqref{pr-1-i}'' is used to obtain gradient estimates of transition densities for L\'evy processes.

\begin{proposition} \label{pr-1}
    Let $(X_t^k)_{t \geq 0}$ be a $k$-dimensional L\'evy process with rotationally invariant characteristic exponent $\psi_k(\xi) = \psi(|\xi|)$, $\xi \in \rk$. If $\psi$ satisfies the Hartman--Wintner condition, then the following statements are equivalent.
    \begin{enumerate}[(i)]
	\item\label{pr-1-i}
        There exists a $(k+2)$-dimensional L\'evy process $(X_t^{k+2})_{t \geq 0}$ with characteristic exponent $\psi_{k+2}(\xi) := \psi(|\xi|)$, $\xi \in \real^{k+2}$.
	\item\label{pr-1-ii}
        The rotationally invariant density $p_t^k$ of $X_t^k$ satisfies $\frac{d}{dr} p_t^k(r)\leq 0$ for all $t>0$.
	\item\label{pr-1-iii}
        $X_t^k$ is unimodal isotropic for all $t>0$.
	\end{enumerate}
	If one \textup{(}hence all\textup{)} of the conditions is satisfied, then
    \begin{equation}\label{pr-eq5}
		p_t^{k+2}(r) = - \frac{1}{2\pi} \frac{1}{r} \frac{d}{dr} p_t^k(r) \fa r >0.
	\end{equation}
\end{proposition}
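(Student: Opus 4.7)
The plan is to use Matheron's dimension-raising identity \eqref{ft-eq3} as a bridge between the densities $p_t^k$ and $p_t^{k+2}$, so that the existence of a $(k+2)$-dimensional L\'evy process becomes equivalent to the non-negativity of a concrete derivative of $p_t^k$. The Hartman--Wintner condition is used throughout to secure integrability of $v(\xi):=e^{-t\psi(|\xi|)}$ both on $\rk$ and on $\real^{k+2}$; without it neither direction can be set up as a Fourier identity.

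I would first dispense with \eqref{pr-1-ii}$\Leftrightarrow$\eqref{pr-1-iii}: by \cite{knop} the condition \eqref{lp-eq2} produces a bounded, smooth density $p_t^k$, so $\Pp(X_t^k=0)=0$ and being unimodal isotropic is precisely the condition $(p_t^k)'\leq 0$. Next I would handle \eqref{pr-1-i}$\Rightarrow$\eqref{pr-1-ii} together with \eqref{pr-eq5} by a one-line Fourier calculation: \eqref{lp-eq2} gives $v\in L^1(\rk)\cap L^1(\real^{k+2})$, Fourier inversion in each dimension gives $\Fcal_kv=p_t^k$ and (under \eqref{pr-1-i}) $\Fcal_{k+2}v=p_t^{k+2}$, and applying \eqref{ft-eq3} to $v$ immediately produces \eqref{pr-eq5}; non-negativity of $p_t^{k+2}$ then forces $(p_t^k)'\leq 0$.

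The hard direction is \eqref{pr-1-ii}$\Rightarrow$\eqref{pr-1-i}. Here I would define $q_t(r):=-\frac{1}{2\pi r}(p_t^k)'(r)\geq 0$ and verify two things. First, that $q_t$ has unit mass on $\real^{k+2}$: writing
\begin{equation*}
    \int_{\real^{k+2}} q_t(|x|)\,dx = \sigma_{k+1}\int_0^\infty q_t(r) r^{k+1}\,dr = \frac{\sigma_{k+1}}{2\pi}\int_0^\infty -(p_t^k)'(r)\, r^k\,dr,
\end{equation*}
an integration by parts combined with the sphere-area identity $\sigma_{k+1}=\frac{2\pi}{k}\sigma_{k-1}$ reduces the right-hand side to $\int_{\rk} p_t^k = 1$. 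The boundary term $r^k p_t^k(r)$ vanishes at $0$ because $p_t^k$ is bounded, and at $\infty$ because a non-increasing radial function that is integrable on $\rk$ must satisfy $r^k p_t^k(r)\to 0$. Second, that its Fourier transform on $\real^{k+2}$ is $e^{-t\psi(|\xi|)}$: once again \eqref{ft-eq3} applied to $v$ (integrable on $\real^{k+2}$ by \eqref{lp-eq2}) gives $\Fcal_{k+2}v=q_t$, so Fourier inversion identifies $v$ as the characteristic function of the probability measure with density $q_t$. Positive definiteness of $v$ for every $t>0$ together with continuity of $\psi(|\cdot|)$ then makes $\psi(|\cdot|)$ a continuous negative definite function on $\real^{k+2}$, and the corresponding L\'evy process is $(X_t^{k+2})_{t\geq 0}$.

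The main obstacle will be the unit-mass verification: the boundary terms in the integration by parts must be controlled via the monotonicity of $p_t^k$, and the sphere-area ratio has to cancel exactly against the factor $\frac{1}{2\pi}$ built into \eqref{ft-eq3}. Everything else is clean Fourier bookkeeping under the Hartman--Wintner condition.
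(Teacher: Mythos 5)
Your proposal is correct and follows the same core strategy as the paper: apply Matheron's identity \eqref{ft-eq3} to $v=e^{-t\psi(|\cdot|)}$ to link the densities across dimensions, with the Hartman--Wintner condition supplying the Fourier-analytic regularity. The direction \eqref{pr-1-i}$\Rightarrow$\eqref{pr-1-ii} and the equivalence \eqref{pr-1-ii}$\Leftrightarrow$\eqref{pr-1-iii} match the paper essentially verbatim. The genuine variation is in \eqref{pr-1-ii}$\Rightarrow$\eqref{pr-1-i}: the paper obtains $\int_{\real^{k+2}}p_t^{k+2}\,dx=1$ by evaluating an inverse Fourier transform at the origin and then proves infinite divisibility of $\mu=p_1^{k+2}\,dx$ via the $n$th-root identity $\left((\Fcal_{k+2}^{-1}p_{1/n}^{k+2})(\xi)\right)^{n}=e^{-\psi(|\xi|)}$, whereas you compute the total mass of $q_t:=-\frac{1}{2\pi r}(p_t^k)'$ directly by a radial integration by parts (the constant $1/(2\pi)$ in \eqref{ft-eq3} cancels against the surface-area ratio $\sigma_{k+1}/\sigma_{k-1}=2\pi/k$, and the boundary term at infinity is controlled by the monotonicity-plus-integrability argument you sketch), then invoke the convolution-semigroup structure to see that $\psi(|\cdot|)$ is continuous negative definite on $\real^{k+2}$. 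Your mass computation is a more elementary and self-contained route to the normalization $\int q_t=1$ --- the paper's inversion shortcut is cleanest once one already has $p_t^{k+2}\geq 0$ in hand --- while the paper's $n$th-root argument for infinite divisibility is a touch more compact. Both are sound.
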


Wolfe \cite{wolfe} and Medgyessy \cite{med} have shown that $X_t^k$ is unimodal isotropic if, and only if, the L\'evy measure $\nu_k$ is unimodal isotropic, see also Sato \cite[Theorem 54.1]{sato}. Let us briefly give an intuitive explanation for pure-jump L\'evy processes $(X_t^k)_{t \geq 0}$. It is known that the L\'evy measure $\nu_k$ is the vague limit of $t^{-1} \Pp(X_t^k \in \cdot)$ as $t \to 0$, i.e.\
\begin{equation}\label{pre-eq7}
	\nu_k(B)
    = \lim_{t \to 0} \frac 1t \Pp(X_t^k \in B)
    = \lim_{t \to 0} \frac{1}{t} \int_B p_t^k(y) \, dy
\end{equation}
for any Borel set $B \in \mathcal{B}(\mathbb{R}^k \setminus \{0\})$ with no mass at the boundary $\nu_k(\partial B)=0$, see e.g.\ \cite[Remark 6.12]{barca} or \cite[Corollary 3.3]{ihke}, and therefore $\nu_k$ is unimodal isotropic as the vague limit of unimodal isotropic distributions. On the other hand, if $\nu_k$ is unimodal isotropic, then the truncated measure $\mu_{\epsilon} := \nu_k(\cdot \cap B(0,\epsilon)^c)$ is unimodal isotropic for each $\epsilon>0$, and the associated compound Poisson process $X_t^{k,\epsilon}$ has a distribution of the form
\begin{equation*}
	e^{-\lambda_{\epsilon} t} \sum_{m=0}^{\infty} \frac{t^m}{m!} \mu_{\epsilon}^{\ast m}
\quad\text{with}\quad
    \lambda_\epsilon = \nu_k(B(0,\epsilon)^c)
\end{equation*}
which implies that $X_t^{k,\epsilon}$ is unimodal isotropic; hence, $X_t^k = \lim_{\epsilon \to 0} X_t^{k,\epsilon}$ is unimodal isotropic.

\begin{proof}[Proof of Proposition~\ref{pr-1}]
	Because of the growth condition \eqref{lp-eq2},
    \begin{equation*}
		p_t^{k+2}(x) := \frac{1}{(2\pi)^{k+2}} \int_{\real^{k+2}} e^{-i x \cdot \xi} e^{-t \psi(|\xi|)} \, d\xi
	\end{equation*}
    is well-defined and, in fact, infinitely often differentiable. Moreover, $p_t^{k+2}$ is rotationally invariant and, as $\psi(0)=0$, we have
    \begin{equation*}
		\int_{\real^{k+2}} p_t^{k+2}(x) \, dx = (\Fcal_{k+2}^{-1}p_t^{k+2})(0) = e^{-t \psi(0)} = 1.
	\end{equation*}
    Because of \eqref{ft-eq3}, the relation \eqref{pr-eq5} holds.

    \bigskip\noindent\eqref{pr-1-i}$\Rightarrow$\eqref{pr-1-ii}:
    By definition, $p_t^{k+2}$ is the density of $X_t^{k+2}$; in particular, $p_t^{k+2} \geq 0$. Because of \eqref{pr-eq5}, we get $\frac{d}{dr} p_t^k(r) \leq 0$.

    \bigskip\noindent\eqref{pr-1-ii}$\Rightarrow$\eqref{pr-1-i}:
    \eqref{pr-eq5} shows $p_1^{k+2} \geq 0$, and therefore $p_1^{k+2}$ is a density function of a probability measure, say $\mu$, on $\real^{k+2}$. By construction, we have
    \begin{equation*}
        \big( (\Fcal_{k+2}^{-1} p_{1/n}^{k+2})(\xi) \big)^n = e^{- \psi_{k+2}(\xi)} = e^{- \psi(|\xi|)} = (\Fcal_{k+2}^{-1} p_1^{k+2})(\xi) 
	\end{equation*}
    for all $n \in \nat$ and $\xi \in \real^{k+2}$. This shows that $\mu(dx) = p_1^{k+2}(x) \, dx$ is infinitely divisible. Consequently, there exists a $(k+2)$-dimensional L\'evy process $(X_t^{k+2})_{t \geq 0}$ such that $X_1^{k+2} \sim \mu$, \begin{equation*}
			\Ee e^{i \xi \cdot X_1^{k+2}} 
            = e^{-\psi_{k+2}(\xi)}.
	\end{equation*}

    \medskip\noindent\eqref{pr-1-ii}$\Leftrightarrow$\eqref{pr-1-iii}:
    Since $X_t^k$ is absolutely continuous -- due to the growth condition \eqref{lp-eq2} --, this follows directly from the definition of a unimodal isotropic distribution.
\end{proof}

\begin{remark} \label{pr-2}
    Proposition~\ref{pr-1} shows that we need an additional assumption on the growth behaviour of the density $p_t^k$ of the ($k$-dimensional) L\'evy process to ensure the existence of a L\'evy process in dimension $k+2$. This assumption is not needed to construct L\'evy processes in lower dimensions. \emph{Indeed:} Let $(X_t^k)_{t \geq 0}$ be a $k$-dimensional L\'evy process with rotationally invariant characteristic exponent $\psi_k(\xi) = \psi(|\xi|)$, and fix $d \leq k-1$. Denote by
    \begin{equation*}
		\pi_d: \rk \to \real^{d}, x = (x_1,\ldots,x_k) \mapsto (x_1,\ldots,x_{d})
	\end{equation*}
	the projection onto the first $d$ coordinates. Since
    \begin{equation*}
		\Ee e^{i \xi \cdot \pi_d(X_t^k)}
		= \Ee e^{i \tilde{\xi} \cdot X_t^k}
		= e^{-t \psi(|\tilde{\xi}|)}
		= e^{-t \psi(|\xi|)}, \quad \xi  \in \real^d,
	\end{equation*}
    for $\tilde{\xi} := (\xi,0,\ldots,0) \in \rk$, it is not difficult to see that $X_t^{d} := \pi_{d}(X_t^k)$ defines a $d$-dimensional L\'evy process with characteristic exponent $\psi_{d}(\xi) = \psi(|\xi|)$, $\xi \in \real^d$.
\end{remark}

Proposition~\ref{pr-1} can be used to derive gradient estimates for the semigroup; they are not needed for the proof of Schoenberg's theorem but are of independent interest.
\begin{corollary} \label{pr-25}
    Let $(X_t)_{t \geq 0}$ be a one-dimensional L\'evy process with characteristic exponent $\psi(\xi) = f(|\xi|^2)$ for some Bernstein function $f$. If $f$ satisfies the Hartman--Wintner condition, then the semigroup $P_t u(x) := \Ee u(x+X_t) = \int u(x+y) p_t(y) \, dy$ satisfies the gradient estimate
    \begin{equation}\label{pr-eq6}
		\left|\frac{d}{dx} P_t u(x) \right| \leq 4 \|u\|_{\infty} \|p_t\|_{\infty}
	\end{equation}
	for all bounded Borel measurable functions $u: \real \to \real$.
\end{corollary}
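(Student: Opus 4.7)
The key observation is that Proposition~\ref{pr-1} is available in both directions. Since $f$ is a Bernstein function, $(X_t)_{t\ge 0}$ is subordinate to a Brownian motion, and we may form analogously the $3$-dimensional subordinated Brownian motion with the same characteristic exponent $\psi(\xi)=f(|\xi|^2)$; in other words, condition \eqref{pr-1-i} of Proposition~\ref{pr-1} holds with $k=1$. Hence \eqref{pr-1-ii} tells us that the one-dimensional density $p_t$ is non-increasing on $(0,\infty)$ (and, being rotationally invariant, even), i.e.\ $p_t$ is unimodal isotropic on $\real$. Under the Hartman--Wintner assumption we additionally know that $p_t\in C^\infty(\real)$ and $\|p_t\|_\infty<\infty$ (see Corollary~\ref{intro-7}\eqref{intro-7-i} and the discussion in \cite{knop}), so that all derivatives needed below exist classically.

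Next I would substitute $y\mapsto y-x$ in $P_tu(x)=\int u(x+y)p_t(y)\,dy$ to obtain
\begin{equation*}
    P_tu(x)=\int_{\real} u(y)\, p_t(y-x)\, dy,
\end{equation*}
and differentiate under the integral sign. The dominated convergence step is justified by the dominator $\|u\|_\infty \sup_{|h|\le 1}|p_t'(y-x-h)|$; integrability of this dominator follows because $p_t'$ itself is integrable, as I explain next.

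The total variation of $p_t$ on $\real$ can be computed directly from unimodality and evenness: since $p_t'\le 0$ on $(0,\infty)$, $p_t'\ge 0$ on $(-\infty,0)$, and $p_t(\pm\infty)=0$,
\begin{equation*}
    \int_{\real} |p_t'(r)|\,dr = 2\int_0^\infty (-p_t'(r))\,dr = 2\,p_t(0) \le 2\|p_t\|_\infty.
\end{equation*}
Consequently,
\begin{equation*}
    \left|\frac{d}{dx} P_tu(x)\right|
    =\left|\int_\real u(y)\,p_t'(y-x)\,dy\right|
    \le \|u\|_\infty \int_\real |p_t'(r)|\,dr
    \le 2\|u\|_\infty\|p_t\|_\infty,
\end{equation*}
which is stronger than the asserted bound \eqref{pr-eq6} (the factor $4$ leaves slack that can absorb, e.g., an alternative estimate obtained by normalising $u$ through $u-\tfrac12(\sup u+\inf u)$, or by splitting $u$ into positive and negative parts).

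The main obstacle, such as it is, is not any single sharp estimate but rather making the unimodality of $p_t$ available at this one-dimensional level: once Proposition~\ref{pr-1} supplies the pointwise monotonicity of $p_t$, the argument reduces to the elementary total-variation computation above. A secondary technical point is to justify differentiation under the integral for a merely bounded Borel function $u$; this is handled by standard dominated convergence once $p_t\in C^\infty$ and $p_t'\in L^1(\real)$ are in hand.
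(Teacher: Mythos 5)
Your proof follows the same strategy as the paper's: invoke Proposition~\ref{pr-1} to get unimodality of the one-dimensional density $p_t$, bound $\int_\real|p_t'|\,dr$ by a multiple of $\|p_t\|_\infty$, then differentiate under the integral. Where you differ is in the middle step: the paper cites the difference-quotient estimate $\int|p_t(x+y)-p_t(x)|\,dx\le 4|y|\|p_t\|_\infty$ from \cite{berger} and passes to the limit with Fatou's lemma, arriving at the constant $4$; you instead use evenness, smoothness (from the Hartman--Wintner condition) and the fundamental theorem of calculus to compute the total variation exactly, $\int_\real|p_t'|\,dr=2p_t(0)=2\|p_t\|_\infty$, giving the sharper constant $2$. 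This direct computation is a genuine improvement and is perfectly valid under \eqref{lp-eq2}, which guarantees $p_t\in C^\infty$ and $p_t(0)<\infty$.

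The one step that is too loosely argued is the differentiation under the integral sign. Your proposed dominator $\|u\|_\infty\sup_{|h|\le1}|p_t'(y-x-h)|$ is not automatically integrable in $y$ just because $p_t'\in L^1$ --- in general, the Hardy--Littlewood-type maximal function of an $L^1$ function need not be in $L^1$. It \emph{does} happen to be integrable here, because \eqref{pr-eq5} gives $|p_t'(r)|=2\pi|r|\,p_t^3(|r|)$ with $p_t^3$ decreasing, and $\int_0^\infty s^2p_t^3(s)\,ds<\infty$; but that argument would need to be spelled out. The paper avoids the maximal function altogether: writing the difference quotient via the FTC as $\tfrac1h\int_0^h\bigl(\int u(y)\,p_t'(y-x-s)\,dy\bigr)ds$ and using that $x\mapsto\int u(y)p_t'(y-x)\,dy$ is continuous (being the convolution of an $L^1$ function with a bounded function), one passes to the limit $h\to0$ without any maximal-function estimate. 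Either repair closes the gap and leaves your overall argument --- and your sharper constant --- intact.
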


\begin{remark}
\begin{enumerate}[(i)]
\item
    Using a convolution argument, it is not difficult to extend Corollary~\ref{pr-25} to L\'evy processes whose characteristic exponent $\psi$ satisfies $\psi(\xi) \geq c f(|\xi|^2)$ for some Bernstein function $f$ and $c>0$; see \cite[Lemma~3.1]{berger}.
\item
    Clearly, $
		\|p_t\|_{\infty} \leq \int_{\real} e^{-t \psi(\xi)} \, d\xi
	$; we can estimate the integral if we have additional information on the growth of $f$, see e.g.\ \cite{ssw}.
\item
    In \cite[Lemma~4.5]{maruyama}, \eqref{pr-eq5} is used to obtain gradient estimates in terms of moments.
\item
    It is possible to iterate \eqref{pr-eq6} to derive estimates for derivatives of higher order, cf. \cite[Lemma~4.1]{maruyama} for details.
\end{enumerate}
\end{remark}

\begin{proof}[Proof of Corollary~\ref{pr-25}]
    It follows from Proposition~\ref{pr-1} that $(X_t)_{t \geq 0}$ is unimodal isotropic, and therefore the density $p_t$ (which exists because of \eqref{lp-eq2}) is unimodal. Using exactly the same reasoning as in
    \cite[proof of Theorem 3.4]{berger} we conclude that
    \begin{equation*}
		\int_{\real} |p_t(x+y)-p_t(x)| \, dx
        \leq 4 |y| \, \|p_t\|_{\infty},
        \quad t>0.
	\end{equation*}
	Applying Fatou's lemma we get
    \begin{equation*}
        \int_{\real} |p_t'(x)| \, dx
        \leq \liminf_{|y| \to 0} \int_{\real} \left| \frac{p_t(x+y)-p_t(x)}{y} \right| \, dx
        \leq 4 \|p_t\|_{\infty}.
	\end{equation*}
	Since
    \begin{equation*}
		\Ee u(x+X_t)
        = \int_{\real} u(x+y) p_t(y) \, dy
        = \int_{\real} u(y) p_t(y-x) \, dy,
	\end{equation*}
    a variant of the differentiation lemma for parameter-dependent integrals, cf.\ \cite[Proposition A.1]{maruyama} or \cite[Problem~14.20]{mims}, yields
    \begin{equation*}
		\frac{d}{dx} \Ee u(x+X_t) = -\int_{\real} u(y) p_t'(y-x) \, dy.
	\end{equation*}
	Note that the differentiation lemma is indeed applicable since the map
	\begin{equation*}
		x \mapsto \int_{\real} u(y) p_t'(y-x) \, dy
	\end{equation*}
    is continuous: it is the convolution of a Lebesgue-integrable function with a bounded function, see e.g.\ \cite[Theorem~15.8(ii)]{mims}. 	Hence,
    \begin{equation*}
        \left| \frac{d}{dx} \Ee u(x+X_t) \right| \leq \|u\|_{\infty} \int_{\real} |p_t'(y-x)| \, dy
        \leq 4 \|u\|_{\infty} \|p_t\|_{\infty}. \qedhere
	\end{equation*}
\end{proof}

We are now ready to prove the first part of Schoenberg's theorem.
\begin{proof}[Proof of Theorem~\ref{intro-5}] We will prove the equivalence of \eqref{intro-5-i}, \eqref{intro-5-ii} and \eqref{intro-5-iii}.

\medskip\noindent
The direction \eqref{intro-5-i}$\Rightarrow$\eqref{intro-5-ii} is clear. For \eqref{intro-5-ii}$\Rightarrow$\eqref{intro-5-iii} we assume first that $f$ satisfies the Hartman--Wintner condition, i.e.\ $\lim_{r \to \infty} f(r)/\log r = \infty$. By assumption, $\psi_k(\xi) := \psi(|\xi|) := f(|\xi|^2)$, $\xi \in \real^{k}$, is a continuous negative definite function for $k =1+2n$, $n \in \nat_0$, satisfying \eqref{lp-eq2}. In particular, there exists a $k$-dimensional L\'evy process $(X_t^k)_{t \geq 0}$ with characteristic exponent $\psi_{k}$. Because of \eqref{lp-eq2}, $X_t^k$ has  a density $p_t^k$ with respect to Lebesgue measure. In particular, \eqref{pr-1-i} in Proposition~\ref{pr-1} holds for any $k =1+2n$. If we set $g_t^{k}(r) := p_t^k(2\sqrt{r})$, then by \eqref{pr-eq5},
\begin{align*}
		\frac{d}{dr} g_t^k(r)
		= \frac{1}{\sqrt{r}} \frac{d}{ds} p_t^k(s) \bigg|_{s=2\sqrt{r}}
		\stackrel{\eqref{pr-eq5}}{=} \frac{1}{\sqrt{r}} (-2\pi s p_t^{k+2}(s)) \bigg|_{s=2\sqrt{r}}
		= - 4\pi g_t^{k+2}(r).
\end{align*}
Iterating this procedure, we obtain
\begin{equation*}
		\frac{d^n}{dr^n} g_t^1(r) = (-4\pi)^n g_t^{1+2n}(r) \fa n \in \mathbb{N}, r >0.
\end{equation*}
As $g_t^{1+2n} \geq 0$ for $n \in \nat_0$, this proves that $g_t^1$ is completely monotone, i.e.\ there exists a finite measure $\mu_t$ on $(0,\infty)$ such that
\begin{equation*} 
    g_t^1(r) = \int_{(0,\infty)} e^{-rs} \, \mu_t(ds), \quad r \geq 0.
\end{equation*}
Applying Fubini's theorem we find
\begin{equation}\label{pr-eq7}
\begin{aligned}
		e^{-t f(r^2)}
		= \int_{\real} e^{ix r} p_t^1(|x|) \, dx
		&= \int_{\real} e^{ix r} \left( \int_{(0,\infty)} e^{-s |x|^2/4} \mu_t(ds) \right) \, dx \\
		&= \int_{(0,\infty)} \int_{\real} e^{ix r} e^{-s |x|^2/4} \, dx \, \mu_t(ds) \\
		&= \sqrt{4\pi} \int_{(0,\infty)} \frac{1}{\sqrt{s}} e^{-r^2/s} \, \mu_t(ds).
\end{aligned}
\end{equation}
This identity shows that $r \mapsto e^{-t f(r)}$ is the Laplace transform of a finite measure, hence, completely monotone. In view of \eqref{lp-eq5} this implies that
\begin{equation*}
		-\frac{1}{t} \frac{d}{dr} e^{-t f(r)} = f'(r) e^{-tf(r)}
\end{equation*}
is completely monotone. Letting $t \to 0$ we conclude that $f'$ is completely monotone, and so $f$ is a Bernstein function; see \cite[Theorem 3.7]{bernstein} for an alternative proof that $e^{-t f(\cdot)} \in \cm$ implies that $f$ is a Bernstein function.

If $f$ does not satisfy the Hartman--Wintner condition, we set
\begin{equation*}
		f_{\epsilon}(r) := f(r) + \epsilon r.
\end{equation*}
Note that $\rk \ni \xi \mapsto f_{\epsilon}(|\xi|^2)$ is a continuous negative definite function for any $k=1+2n$, $n \in \nat_0$ and $\epsilon>0$. As $f \geq 0$, $f_{\epsilon}$ obviously satisfies the Hartman--Wintner condition. The first part of this proof shows that $f_{\epsilon}$ is a Bernstein function. Consequently, $f$ is a Bernstein function as the pointwise limit of Bernstein functions, cf.\ \cite[Corollary 3.8(ii)]{bernstein}.

\bigskip\noindent
\eqref{intro-5-iii}$\Rightarrow$\eqref{intro-5-i}:
Since $f$ is a Bernstein function, there exists a subordinator $(S_t)_{t \geq 0}$ with Laplace transform $e^{-tf}$. If $(B_t)_{t \geq 0}$ is a $k$-dimensional Brownian motion, then $X_t := B_{S_t}$ is a $k$-dimensional L\'evy process with characteristic exponent $f(|\xi|^2)$. This implies that $\rk \ni \xi \mapsto f(|\xi|^2)$ is a continuous and negative definite function.
\end{proof}

It remains to prove the equivalence of \eqref{intro-5-iii} and \eqref{intro-5-iv} in Theorem~\ref{intro-5}. To this end, we recall a result on the distribution of subordinated Brownian motion.
\begin{lemma} \label{pr-3}
    Let $(B_t)_{t \geq 0}$ be a $k$-dimensional Brownian motion and $(S_t)_{t \geq 0}$ a subordinator with Laplace exponent $f$. Then the distribution of $X_t := B_{S_t}$ equals
    \begin{equation*}
		\Pp(X_t \in B) = \Pp(S_t = 0) \delta_0(B)  + \int_B p_t(x) \, dx, \quad B \in \Bcal(\rk),\; t>0;
	\end{equation*}
	here $p_t: \rk \to [0,\infty)$ is a rotationally invariant function and $p_t(\sqrt{\cdot}) \in \cm$.
\end{lemma}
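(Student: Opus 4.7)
The plan is to condition on the subordinator $S_t$ and exploit the independence of $(B_t)_{t\geq 0}$ and $(S_t)_{t\geq 0}$. By the tower property,
\begin{equation*}
    \Pp(X_t \in B) = \int_{[0,\infty)} \Pp(B_s \in B) \, \Pp(S_t \in ds), \quad B \in \Bcal(\rk).
\end{equation*}
I would split the integration range as $\{0\} \cup (0,\infty)$. Since $B_0 = 0$ almost surely, the atom at $s=0$ contributes exactly $\Pp(S_t=0)\delta_0(B)$; for $s>0$, the law of $B_s$ has the $k$-dimensional Gaussian density $(2\pi s)^{-k/2} e^{-|x|^2/(2s)}$. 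A Fubini argument (justified by non-negativity of the integrand) then yields the claimed decomposition with
\begin{equation*}
    p_t(x) = \int_{(0,\infty)} (2\pi s)^{-k/2} e^{-|x|^2/(2s)} \, \Pp(S_t \in ds).
\end{equation*}

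Rotational invariance is immediate since $p_t(x)$ depends on $x$ only through $|x|^2$. For the claim $p_t(\sqrt{\cdot}) \in \cm$, I would rewrite
\begin{equation*}
    p_t(\sqrt{r}) = \int_{(0,\infty)} (2\pi s)^{-k/2} e^{-r/(2s)} \, \Pp(S_t \in ds), \quad r>0,
\end{equation*}
and recognise this as the Laplace transform at $r$ of the positive measure on $(0,\infty)$ obtained by pushing the finite measure $(2\pi s)^{-k/2} \Pp(S_t \in ds)$ (restricted to $s>0$) forward under the bijection $s \mapsto u := 1/(2s)$. Bernstein's theorem (\cite[Theorem 1.4]{bernstein}, say) then gives $p_t(\sqrt{\cdot}) \in \cm$, provided the integral is finite for every $r>0$, which follows from $(2\pi s)^{-k/2} e^{-r/(2s)}$ being uniformly bounded in $s$ for each fixed $r>0$ together with $\Pp(S_t\in\cdot)$ being a (sub-)probability measure.

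The main obstacle is essentially bookkeeping: verifying that all Fubini/conditioning steps are legitimate and that the pushforward is a well-defined positive measure whose Laplace transform is finite on $(0,\infty)$. Alternatively, one can bypass the Laplace-transform viewpoint and check complete monotonicity directly by differentiating under the integral sign,
\begin{equation*}
    (-1)^n \frac{d^n}{dr^n} p_t(\sqrt{r}) = \int_{(0,\infty)} (2\pi s)^{-k/2} (2s)^{-n} e^{-r/(2s)} \, \Pp(S_t \in ds) \geq 0,
\end{equation*}
the interchange being justified because, for fixed $r>0$, the integrand $(2\pi s)^{-k/2} (2s)^{-n} e^{-r/(2s)}$ is bounded in $s \in (0,\infty)$ (it decays exponentially as $s \to 0$ and is bounded by a negative power of $s$ as $s\to\infty$), which together with $\Pp(S_t \in \cdot)$ having finite total mass makes every derivative integrable.
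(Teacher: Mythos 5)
Your proposal is correct and takes essentially the same approach as the paper: the same conditioning on $S_t$ and split of the mixing integral into the atom at $s=0$ and the absolutely continuous part on $(0,\infty)$, yielding the same formula \eqref{pr-eq9} for $p_t$, and your second (differentiation-under-the-integral) argument for $p_t(\sqrt{\cdot})\in\cm$ is exactly what the paper invokes via the ``differentiation lemma for parameter-dependent integrals.''
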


\begin{proof}
	Since $(S_t)_{t \geq 0}$ and $(B_t)_{t \geq 0}$ are independent, we have
    \begin{align*}
		\Pp(X_t \in B)
		&= \int_{[0,\infty)} \Pp(B_s \in B) \, \Pp(S_t \in ds) \\
        &= \delta_0(B) \Pp(S_t = 0) + \int_{(0,\infty)} \left( \int_{B} \frac{1}{(2\pi s)^{k/2}} \exp \left(- \frac{|y|^2}{2s} \right) dy \right)  \Pp(S_t \in ds) \\
		&= \delta_0(B) \Pp(S_t = 0) + \int_B p_t(y) \, dy
	\end{align*}
	for
    \begin{equation}\label{pr-eq9}
		p_t(y) := \int_{(0,\infty)} \frac{1}{(2\pi s)^{k/2}} \exp \left(- \frac{|y|^2}{2s} \right) \, \Pp(S_t \in ds). 	
    \end{equation}
    The fact that $p_t(\sqrt{\cdot})$ is completely monotone follows directly from this representation and the differentiation lemma for parameter-dependent integrals.
\end{proof}

\begin{proof}[Proof of Theorem~\ref{intro-5}, equivalence of \eqref{intro-5-iii} \& \eqref{intro-5-iv}]
\eqref{intro-5-iii}$\Rightarrow$\eqref{intro-5-iv}: As $f$ is a Bernstein function, there exists a subordinator $(S_t)_{t \geq 0}$ with Laplace exponent $f$. The subordinated Brownian motion $X_t := B_{S_t}$ is a L\'evy process with characteristic exponent $\psi(\xi) = f(|\xi|^2)$ and, by Lemma~\ref{pr-3},
\begin{equation*}
		\Pp(X_t \in B) = \Pp(S_t = 0) \delta_0(B)  + \int_B p_t(x) \, dx, \quad B \in \Bcal(\rk),
\end{equation*}
for the rotationally invariant non-negative function $p_t$ defined in \eqref{pr-eq9} satisfying $p_t(\sqrt{\cdot}) \in \cm$. By the Markov property of $(S_t)_{t \geq 0}$, we have for any $s \leq t$
\begin{equation*}
		\Pp(S_t=0)
		= \Ee\left[\Pp(z+S_{t-s}=0) \big|_{z=S_s} \right]
		= \Ee\left[\I_{\{S_s=0\}} \Pp(S_{t-s}=0)\right]
\end{equation*}
as $(S_t)_{t \geq 0}$ has non-decreasing sample paths. Consequently, $f(t) := \Pp(S_t=0)$ satisfies $f(t+s) = f(t) f(s)$. Since $f$ is right-continuous, this implies, by the Cauchy--Abel functional equation, $\Pp(S_t=0)= f(t) = e^{-ct}$ for some $c \in [0,\infty]$, see e.g.\ \cite[Theorem A.1]{barca} for a proof.

\bigskip\noindent
For the converse direction	\eqref{intro-5-iv}$\Rightarrow$\eqref{intro-5-iii}, we can argue exactly as in the proof of \eqref{pr-eq7} in the step ``\eqref{intro-5-ii}$\Rightarrow$\eqref{intro-5-iii}.''
\end{proof}

\section{Proof of Corollary~\ref{intro-6}, Theorem~\ref{intro-9} \& Corollary~\ref{intro-7}}

\begin{proof}[Proof of Corollary~\ref{intro-6}]
    \eqref{intro-6-i}
    The proof of Theorem~\ref{intro-5} shows that $p_t^k$ is given by \eqref{pr-eq9}, i.e.
    \begin{equation*}
    		p_t^k(r) = \int_{(0,\infty)} \frac{1}{(2\pi s)^{k/2}} \exp \left(- \frac{r^2}{2s} \right) \, \Pp(S_t \in ds).
    \end{equation*}
    If we differentiate $p_t^k$ with respect to $r$, then we obtain
    \begin{align*}
    		\frac{d}{dr} p_t^k(r)
    		= - r \int_{(0,\infty)} \frac{1}{s} \frac{1}{(2\pi s)^{k/2}} \exp \left(- \frac{r^2}{2s} \right) \Pp(S_t \in ds)
    		= - 2\pi r p_t^{k+2}(r).
    \end{align*}

    \medskip\noindent\eqref{intro-6-ii}
    Since the L\'evy measure $\nu_k(dy)$ is the vague limit of $t^{-1} p_t^k(y) \, dy$, cf.\ \eqref{pre-eq7}, the assertion follows formally from \eqref{intro-6-i} by dividing both sides by $t^{-1}$ and letting $t \to 0$. For a rigorous argument we note that the density $m_k$ of the L\'evy measure $\nu_k$ is given by
    \begin{equation*}
    		m_k(r) = \int_{(0,\infty)} \frac{1}{(2\pi s)^{k/2}} \exp \left(- \frac{r^2}{2s} \right) \, \mu(ds)
    \end{equation*}
    where $\mu$ denotes the L\'evy measure of the subordinator $(S_t)_{t \geq 0}$, cf.\ \cite[Theorem 30.1]{sato}. Now the claim follows using exactly the same calculation as in \eqref{intro-6-i}.
\end{proof}

\begin{proof}[Proof of Theorem~\ref{intro-9}]
    For the existence of the process $(X_t^{k-2})_{t \geq 0}$ see Remark~\ref{pr-2}. Since both $A_k$ and $A_{k-2}$ are pseudo-differential operators with rotationally invariant symbols, cf.\ \eqref{lp-eq1}, it is obvious that $A_k u$ and $A_{k-2} u$ are rotationally invariant for any smooth rotationally invariant $u$ with compact support. By \eqref{ft-eq3} and \eqref{lp-eq1}, we have
    \begin{align*}
		A_k u(r)
		= - \Fcal_k^{-1}( \psi \cdot \Fcal_k u)(r)
		&=   2\pi \frac{1}{r} \frac{d}{dr} \Fcal_{k-2}^{-1}(\psi \cdot \Fcal_k u)(r) \\
		&= -  2\pi \frac{1}{r} \frac{d}{dr} A_{k-2}(\Fcal_{k-2}^{-1} \Fcal_k u)(r).
	\end{align*}
	If we can show that
    \begin{equation*}
		-  2\pi \Fcal_{k-2}^{-1}(\Fcal_k u)(r) = \int_0^r s \cdot u(s) \, ds -C =: v(r), \quad r>0,
	\end{equation*}
    where $C := \int_0^{\infty} s u(s) \, ds$, then the claim follows; note that $A_{k-2}(C\I_{\real^{k-2}})=0$ by the very definition of the generator.  Applying \eqref{ft-eq2}, we find
    \begin{align*}
		\Fcal_k u(r)
		&= \frac{1}{(2\pi)^{k/2} r^{k/2-1}} \int_{0}^{\infty} u(s) s^{k/2} J_{k/2-1}(s r) \, ds \\
		&= \frac{1}{(2\pi)^{k/2} r^{k/2-1}} \int_0^{\infty}  \left( \frac{d}{ds} v(s) \right) s^{k/2-1} J_{k/2-1}(s r) \, ds.
	\end{align*}
	Since $v$ has compact support, the integration by parts formula yields \begin{align*}
		\Fcal_k u(r)
		&= - \frac{1}{(2\pi)^{k/2} r^{k/2-1}} \int_0^{\infty} v(s) s^{k/2-2} \left( \left[ \frac{d}{2}-1 \right] J_{k/2-1}(sr) + s \frac{d}{ds} J_{k/2-1}(sr) \right) \, ds.
	\end{align*}
	As \begin{equation*}
		\frac{d}{dz} J_{k/2-1}(z) = - \left( \frac{d}{2}-1 \right) \frac{1}{z} J_{k/2-1}(z) + J_{k/2-2}(z),
	\end{equation*}
	cf.\ \cite[(10.6.2)]{nist}, we get \begin{align*}
		\Fcal_k u(r)
		&= - \frac{1}{(2\pi)^{k/2} r^{(k-2)/2-1}} \int_0^{\infty} v(s) s^{(k-2)/2} J_{(k-2)/2-1}(sr) \, du
		= - \frac 1{2\pi} \Fcal_{k-2} v(r).
	\end{align*}
	Consequently, \begin{equation*}
		\Fcal_{k-2}^{-1}(\Fcal_k u)(r) = - \frac 1{2\pi} v(r).\qedhere
	\end{equation*}
\end{proof}

\begin{proof}[Proof of Corollary~\ref{intro-7}]
\eqref{intro-7-i}
    By \eqref{pr-eq9} and the monotone convergence theorem, we have
    \begin{equation*}
		\lim_{r \to 0} p_t(r)
		=  \int_{(0,\infty)} \frac{1}{(2\pi s)^{k/2}} \, \Pp(S_t \in ds)
		= \frac{1}{(2\pi)^{k/2}} \Ee(S_t^{-k/2} \I_{\{S_t \neq 0\}}).
	\end{equation*}
	Moreover,
    \begin{equation*}
		\Pp(S_t=0) = \Pp(X_t=0) = e^{-ct}.
	\end{equation*}
    From this the equivalence ``(a)$\Leftrightarrow$(b)'' follows easily. In order to prove ``(b)$\Leftrightarrow$(c)'' we use the following elementary identity
    \begin{equation*}
		\frac{1}{y^\kappa} = \frac{1}{\Gamma(\kappa)} \int_0^{\infty} e^{-ry} r^{\kappa-1} \, dr,
        \quad y \geq 0,\; \kappa>0
	\end{equation*}
	which entails
    \begin{equation*}
		\Ee(S_t^{-k/2}) = \frac{1}{\Gamma(k/2)} \int_0^{\infty} e^{-t f(r)} r^{k/2-1} \, dr.
	\end{equation*}
    If $\psi(\xi) := f(|\xi|^2)$ satisfies the Hartman--Wintner condition, then obviously $\Ee(S_t^{-k/2})<\infty$ for all $t >0$. Conversely, suppose that $\Ee(S_t^{-k/2})<\infty$ for sufficiently small $t>0$. Introducing polar coordinates, we get \begin{align*}
		\int_1^\infty e^{-t f(r)} r^{-1/2} \, dr
        \leq \int_0^\infty e^{-t f(r)} r^{k/2-1} \, dr < \infty,
	\end{align*}
	and so
    \begin{align*}
		\infty> \int_1^\infty e^{-t f(r)} r^{-1/2} \, dr
		= 4 \int_1^\infty e^{-t f(s^4)} s \, ds.
	\end{align*}
    Since $s \mapsto e^{-t f(s^4)}$ is a continuous function this implies $\lim_{s \to \infty} e^{-t f(s^4)} s = 0$. This, in turn, gives \begin{equation*}
		\infty = -\lim_{s \to \infty} \log \left(e^{-t f(s^4)} s \right) = \lim_{s \to \infty} (t f(s^4) - \log s).
	\end{equation*}
	Consequently, \begin{equation*}
		\frac{f(s^4)}{\log (s^2)} \geq \frac{1}{t} \frac{\log s}{2 \log s} = \frac{1}{2t}
	\end{equation*}
    for $s>0$ sufficiently large and $t>0$ small. Letting $t \to 0$ proves $\lim_{s \to \infty} \frac{f(s^4)}{\log(s^2)} = \infty$, and this implies readily the assertion.

\bigskip\noindent\eqref{intro-7-ii} The direction ``$\Leftarrow$'' follows directly from the definition of a compound Poisson process. To prove ``$\Rightarrow$'', we define a stopping time $\tau := \inf\{t>0; X_t \neq 0\}$. By the strong Markov property,
\begin{equation*}
		\Pp(X_t = 0, \tau < t)
		= \Ee\left[\I_{\{\tau<t\}} \Pp(z+X_{t-\tau}=0) \big|_{z=X_{\tau}} \right]
		= 0
\end{equation*}
as $\Pp(x+X_s = 0)=0$ for all $s>0$ and $x \neq 0$, cf.\ \eqref{intro-eq11}. This shows
\begin{equation*}
		\Pp(X_t = 0)
		= \Pp(X_t = 0, \tau \geq t)
		= \Pp \left( \sup_{s \leq t} |X_s| = 0 \right).
\end{equation*}
On the other hand, we have
\begin{equation*}
		\Pp \left( \sup_{s \leq t} |X_s| = 0 \right)
		\leq \Pp(N_t(\rk \setminus \{0\})=0)
		= e^{-t \nu(\rk \setminus \{0\})};
\end{equation*}
here $N$ denotes the jump measure \eqref{lp-eq3} and $\nu$ the L\'evy measure of $(X_t)_{t \geq 0}$. Combining both considerations and using that, by assumption and \eqref{intro-eq11}, $\Pp(X_t = 0)= e^{-ct}$ for some $c \in [0,\infty)$, we get $\nu(\rk \setminus \{0\}) = c <\infty$. This proves that $(X_t)_{t \geq 0}$ is a compound Poisson process.
\end{proof}

For an alternative proof of Corollary~\ref{intro-7}\eqref{intro-7-ii} see \cite[Theorem 27.4]{sato}.

\end{document}